\newtheorem{theorem}{Theorem}
\newtheorem{lemma}{Lemma}
\theoremstyle{remark}
\theoremstyle{remark}
\newtheorem{ex}{Example}
\theoremstyle{remark}
\theoremstyle{theorem}
\newcommand{\E}{{\mathbb{E}}}
\renewcommand{\P}{{\mathbf{P}}}
\newcommand{\1}{\mathbf{1}}
\newcommand{\F}{\mathcal{F}}
\newcommand{\N}{\mathbb{N}}
\newcommand{\dd}{\mathrm{d}}
\newcommand{\lc}{[\![}
\begin{document}

\title{The Uniform Integrability of Martingales. On a Question by Alexander Cherny\thanks{%
I thank Zhenyu Cui for telling me about  the paper \citet{Cherny_2006} and  Mikhail Urusov for sharing his very helpful insights and his careful reading of an earlier version of this note. I am  indebted to Beatrice Acciaio, Ioannis Karatzas, Kostas Kardaras, Martin Larsson,  Nicolas Perkowski, and Pietro Siorpaes for many discussions on the subject matter of this note.   I am grateful to an anonymous referee and an associate editor for their detailed and helpful comments. 
I  acknowledge generous support from the Oxford-Man Institute of Quantitative Finance, University of Oxford,  %An earlier version of this paper was titled ``On a Question by Alexander Cherny.''
}}
\author{Johannes Ruf\thanks{%
E-Mail: j.ruf@ucl.ac.uk} \\
%EndAName
Department of Mathematics\\University College London}
\maketitle

\begin{abstract}
Let $X$ be a progressively measurable, almost surely right-continuous stochastic process such that $X_\tau \in L^1$ and $\E[X_\tau] = \E[X_0]$ for each finite stopping time $\tau$. In 2006, Cherny showed  that $X$ is then a uniformly integrable martingale provided that $X$ is additionally nonnegative. Cherny then  posed the question whether this implication also holds even if $X$ is not necessarily nonnegative. We provide an example that illustrates that this implication is wrong, in general. If, however, an additional integrability assumption is made on the limit inferior of $|X|$ then the implication holds. Finally, we argue  that this integrability assumption  holds if the stopping times are allowed to be randomized in a suitable sense.

{\bf Key words:} Stopping time; Uniform integrability

{\bf AMS 2000 Subject Classifications:} 60G44
\end{abstract}

%% use optional labels to link authors explicitly to addresses:
%% \author[label1,label2]{}
%% \address[label1]{}
%% \address[label2]{}

%\tableofcontents
%\newpage

\section{Introduction}

We fix a filtered probability space $(\Omega, \F, \mathbb F, \P)$ with expectation operator $\E[\cdot]$, where  $\mathbb F = (\F_t)_{t \in [0,\infty)}$ and $\F_\infty = \bigvee_{t \in [0,\infty)} \F_t \subset \F$. Furthermore, we fix a progressively measurable, almost surely right-continuous process $X$. We write $Z \in L^1$ if $\E[|Z|] < \infty$ for some random variable $Z$.  For some $\mathbb F$--adapted process $Y$ and some stopping time $\eta$ we write $Y^\eta$ to denote the process $Y$ stopped at time $\eta$; to wit, $Y^\eta_t = Y_{\eta \wedge t}$ for each $t \in [0,\infty)$.  All identifications and statements in the following are in the almost-sure sense.

We consider the following five statements:
\begin{enumerate}[label={\rm(\Roman{*})}, ref={\rm(\Roman{*})}]
	\item\label{I} $X$ is a uniformly integrable martingale.
	\item\label{II} $X_\infty = \lim_{t \uparrow \infty} X_t$ exists, $X_\tau \in L^1$ and $\E[X_\tau] = \E[X_0]$ for all stopping times $\tau$.
	\item\label{III}  $X_\tau \in L^1$ and $\E[X_\tau] = \E[X_0]$ for all finite stopping times $\tau$ and $\liminf_{t \uparrow \infty} |X_t| \in L^1$.
	\item\label{IV}  $X_\tau \in L^1$ and  $\E[X_\tau] = \E[X_0]$ for all finite stopping times $\tau$.
	\item\label{V}  $X_t \in L^1$ and  $\E[X_t] = \E[X_0]$ for all $t \in [0,\infty)$.
\end{enumerate}

The optional sampling theorem yields the implications \ref{I} $\Rightarrow$ \ref{II} $\Rightarrow$ \ref{III} $\Rightarrow$  \ref{IV}  $\Rightarrow$ \ref{V}.  The implication \ref{II} $\Rightarrow$ \ref{I} follows from the following simple argument. Fix $s,t \in [0,\infty]$ with $s <t$ and $A \in \F_s$. Let $\tau_1 = s$ and $\tau_2 = s \1_{A^c} + t \1_A$ denote two stopping times, where $A^c = \Omega \setminus A$. Then, by assumption, $\E[X_{\tau_1}] = \E[X_{\tau_2}]$, which yields   $\E[X_{s} \1_A] = \E[X_t \1_{A}]$. Thus we obtain the desired implication \ref{II} $\Rightarrow$  \ref{I}.  However, if only \ref{III} or \ref{IV} is assumed, this argument only yields the martingale property of $X$ but not its uniform integrability.

 \citet{Cherny_2006} now asks the question whether also the implication   \ref{IV} $\Rightarrow$  \ref{I} holds.  Before answering this question and discussing the role of \ref{III}, let us first briefly consider the statement in \ref{V}.   \citet{Hulley_2009} provides an example of a local martingale $X$, such that  \ref{V} is satisfied but $X$ is not a martingale.
Alternatively, if $X$ denotes Brownian motion started in $0$  then \ref{V} holds but \ref{IV} is not satisfied.  To see this, we only need to let $\tau$ denote the first hitting time of level $1$ by $X$. Thus, the implication \ref{V}  $\Rightarrow$ \ref{IV} does \emph{not} hold in general.

We now return to discuss the missing implications, namely whether \ref{III} or, more generally \ref{IV}, imply \ref{I} (or equivalently, \ref{II}).
 \citet{Cherny_2006} proves that these implications hold  if $X$ is nonnegative.  The following theorem proves that the implication \ref{III} $\Rightarrow$ \ref{I} holds always, not only if $X$ is nonnegative. 
  However, the example of the next section shows that \ref{IV} does not necessarily imply any of the statements \ref{I} -- \ref{III} if the nonnegativity assumption on $X$ is dropped.  Yet, as proven in Section~\ref{S:random}, if the filtered probability space  $(\Omega, \F, \mathbb F, \P)$ allows for some additional randomization, then these implications hold. More precisely, if the filtered probability space  $(\Omega, \F, \mathbb F, \P)$ allows for a $(0,1)$--uniformly distributed random variable,  measurable with respect to $\F_\eta$ for some finite stopping time $\eta$, then the implications  \ref{IV} $\Rightarrow$ \ref{I},   \ref{IV} $\Rightarrow$ \ref{II}, and   \ref{IV} $\Rightarrow$ \ref{III} hold.

\begin{theorem} \label{T:1}
	The statements \ref{I}, \ref{II}, and \ref{III} are equivalent. 
\end{theorem}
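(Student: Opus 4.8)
The plan is to prove the only missing implication, \ref{III} $\Rightarrow$ \ref{I}. Indeed \ref{I} $\Leftrightarrow$ \ref{II} (the direction \ref{I} $\Rightarrow$ \ref{II} is optional sampling and \ref{II} $\Rightarrow$ \ref{I} was shown in the introduction), and the chain \ref{II} $\Rightarrow$ \ref{III} $\Rightarrow$ \ref{IV} $\Rightarrow$ \ref{V} is already recorded, so establishing \ref{III} $\Rightarrow$ \ref{I} makes \ref{I}, \ref{II} and \ref{III} mutually equivalent.

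Assume \ref{III}. The part of \ref{III} concerning \emph{finite} stopping times already forces $X$ to be a martingale, by exactly the argument from the introduction applied with $\tau_1 = s$ and $\tau_2 = s\1_{A^c} + t\1_A$ for finite $s < t$ and $A \in \F_s$. It remains to promote this martingale to a uniformly integrable one; the example of the next section shows that the martingale property together with \ref{IV} alone does not suffice, so the hypothesis $\liminf_{t\uparrow\infty}|X_t| \in L^1$ must be used essentially here.

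I would localise at the first-passage times $R_m := \inf\{t \ge 0 : |X_t| \ge m\}$, $m \in \N$. Since $X$ is a right-continuous martingale, Doob's maximal inequality gives $\sup_{s \le t}|X_s| < \infty$ almost surely for every $t$, whence $R_m \uparrow \infty$ almost surely. Each $R_m \wedge k$ is a finite stopping time, so \ref{III} yields $X_{R_m \wedge k} \in L^1$ and $\E[X_{R_m \wedge k}] = \E[X_0]$; and since $|X_{R_m \wedge k}| < m$ on $\{R_m > k\}$, the only source of large values of the stopped martingale $X^{R_m}$ is $X_{R_m}$ on $\{R_m < \infty\}$. The key step is to show that each $X^{R_m}$ is uniformly integrable with $\E\big[|X_{R_m}|\1_{\{R_m < \infty\}}\big] \to 0$ as $m \to \infty$. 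Granting this: $X^{R_m}$ converges almost surely and in $L^1$ to $X^{R_m}_\infty = X_{R_m}\1_{\{R_m < \infty\}} + X_\infty\1_{\{R_m = \infty\}}$ with $\E[X^{R_m}_\infty] = \E[X_0]$, where on $\{R_m = \infty\}$ the martingale $X = X^{R_m}$ stays bounded by $m$, so $X_t$ converges on $\bigcup_m\{R_m = \infty\} = \{\sup_t|X_t| < \infty\}$; moreover $m\,\P(R_m < \infty) \le \E\big[|X_{R_m}|\1_{\{R_m < \infty\}}\big] \to 0$ forces $\P(\sup_t|X_t| = \infty) = \lim_m \P(R_m < \infty) = 0$. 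Hence $X$ converges almost surely to a finite limit $X_\infty$, and $X_\infty \in L^1$ because $|X_\infty| = \liminf_{t\uparrow\infty}|X_t| \in L^1$. Letting $m \uparrow \infty$ in $X^{R_m}_\infty$ (the control from the key step giving $L^1$-convergence $X^{R_m}_\infty \to X_\infty$) yields $\E[X_\infty] = \E[X_0]$ and, since $X_t = \E[X^{R_m}_\infty \mid \F_t]$ on $\{R_m > t\}$, that $X_t = \E[X_\infty \mid \F_t]$ for every $t$; thus $X$ is a uniformly integrable martingale, i.e.\ \ref{I}.

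The heart of the argument, and the step I expect to be the main obstacle, is the key step above: converting the information $\liminf_{t\uparrow\infty}|X_t| \in L^1$, which concerns the path near $+\infty$, into a uniform integrability bound for $X$ at the finite passage times $R_m$. Two features make this delicate: $X$ may jump across level $m$, so $|X_{R_m}|$ is not a priori comparable to $m$; and $\liminf_{t\uparrow\infty}|X_t|$ is $\F_\infty$-measurable, so it cannot appear directly inside a stopping time. The mechanism I would use is to exploit, besides the equality $\E[X_\sigma] = \E[X_0]$, the companion assertion $X_\sigma \in L^1$ for \emph{every} finite stopping time $\sigma$ that \ref{III} provides, applied to truncations of stopping times of the form ``first time after $R_m$ at which $|X|$ returns below a fixed finite level $c$'' --- such returns occur almost surely and at arbitrarily large times precisely because $\liminf_{t\uparrow\infty}|X_t| < \infty$ almost surely --- so as to relate $X_{R_m}$ to the value of $X$ at such a return time by optional sampling over the intervening excursion, and then to let the truncation and the level $c$ tend to infinity, with $\liminf_{t\uparrow\infty}|X_t| \in L^1$ supplying the dominating control.
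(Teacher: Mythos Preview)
Your reduction to the ``key step'' is sound: once you know that each $X^{R_m}$ is uniformly integrable and that $\E\big[|X_{R_m}|\1_{\{R_m<\infty\}}\big]\to 0$, the conclusion follows exactly as you outline. The gap is that the mechanism you sketch for the key step does not close. Applying $\E[X_\tau]=\E[X_0]$ to truncations of $R_m$ and of a return time $S_c=\inf\{t\ge R_m:|X_t|\le c\}$ yields, after subtraction, control of $\E[X_{R_m}\1_{\{R_m\le n\}}]$ in terms of $\E[X_{S_c\wedge n}\1_{\{R_m\le n\}}]$ --- but this is information about the \emph{signed} expectation, not about $\E[|X_{R_m}|\1_{\{R_m\le n\}}]$, and splitting by the sign of $X_{R_m}$ leaves an uncontrolled boundary term $\E[|X_n|\1_{\{R_m\le n<S_c\}}]$. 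Moreover, for fixed $c$ the time $S_c$ is finite only on $\{\liminf_t|X_t|<c\}$, so one is forced to send $c\to\infty$, at which point the bound $|X_{S_c}|\le c$ becomes useless; it is not clear how $\liminf_t|X_t|\in L^1$ enters as a \emph{dominating} control rather than merely guaranteeing that some return eventually occurs. In short, the ``optional sampling over the intervening excursion'' you invoke is exactly the UI you are trying to establish.

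The paper avoids this circularity by a different device. It first constructs an $\F_\infty$--measurable random variable $Y$ with $|Y|=\liminf_{t\uparrow\infty}|X_t|$ (taking $Y=\liminf_n X_{T_n}$ along suitable random times $T_n$), then subtracts the uniformly integrable martingale $t\mapsto\E[Y\mid\F_t]$ to obtain $M_t=X_t-\E[Y\mid\F_t]$. The point is that $M$ still satisfies \ref{III} but now $\liminf_{t\uparrow\infty}|M_t|=0$, so the ``return below $\varepsilon^2/4$'' time after any fixed horizon is \emph{always} finite, for every $\varepsilon>0$. A single comparison of $\E[M_\tau]$ at the minimum of a first-passage time above $\varepsilon$ and this return time then forces $M\le 0$, and symmetrically $M\ge 0$, hence $M\equiv 0$ and $X_t=\E[Y\mid\F_t]$ is uniformly integrable. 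The subtraction of $\E[Y\mid\F_t]$ is the missing idea in your plan: it converts the $L^1$ hypothesis on $\liminf|X_t|$ into the qualitative statement $\liminf|M_t|=0$, which is what makes the return-time finite without needing to let a level $c$ diverge.
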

\begin{proof}
We only need to show the implication \ref{III} $\Rightarrow$ \ref{I}. We start by arguing that we may assume, without loss of generality, that $\mathbb F$ and $X(\omega)$ are right-continuous  for each $\omega \in \Omega$.  Towards this end, we set $\F_t^+ = \bigcap_{s >t} \F_s$ and $\mathbb F^+ = (\F_t^+)_{t \in [0,\infty)}$.  Next, we observe that the $\mathbb F$--martingale $X$ is also an $\mathbb F^+$--martingale due to Exercise~1.5.8 in \citet{SV_multi}. Now, Lemma~1.1 in \citet{F1972} yields the existence of a right-continuous version of $X$, which we call again $X$.  
Next, we fix a finite $\mathbb F^+$--stopping time $\widehat \sigma$ and set $\sigma = \widehat \sigma + 1$, which is a finite $\mathbb F$--stopping time by Theorem~IV.57 in \citet{Dellacherie/Meyer:1978}.  Then, $X^\sigma$ satisfies  \ref{II} and is therefore a uniformly integrable $\mathbb F^+$--martingale. The optional sampling theorem then also yields that $X_{\widehat \sigma} \in L^1$ and $\E[X_{\widehat \sigma}] = \E[X_0]$. Thus, \ref{III} also holds for all finite $\mathbb F^+$--stopping times, and we shall assume from now on, throughout this proof, that $\mathbb F$ and $X(\omega)$ are right-continuous  for each $\omega \in \Omega$.

We now construct
 a nondecreasing sequence $(T_n)_{n \in \N}$ of $[0,\infty)$--valued random variables such that $\lim_{n \uparrow \infty} |X_{T_n}| = \liminf_{t\uparrow \infty} |X_t| \in \F_\infty$.  For example, we can choose $T_n$ as the first time that $| |X| - \liminf_{t\uparrow \infty} |X_t|| \leq 1/n$.  Then $T_n$ is $\F_\infty$--measurable, due to the right-continuity of $X$, for each $n \in N$ (but not necessarily a stopping time).  Now we set $Y = \liminf_{n \uparrow \infty} X_{T_n} \in \F_\infty$ and note that $|Y| = \liminf_{t\uparrow \infty} |X_t|$, thus $Y \in L^1$ by assumption.

Next, let us consider the martingale $M$ given by $M_t = X_t - \E[Y|\F_t]$ for each $t \in [0,\infty)$, where we may use a right-continuous modification of the conditional expectation process thanks to $\mathbb F$ being right-continuous; see again Lemma~1.1 in \citet{F1972}.  Then \ref{III}  holds with $X$ replaced by $M$. We note that $\liminf_{t \uparrow \infty} |M_t| = 0$ since
$\liminf_{n \uparrow \infty} M_{T_n} = \liminf_{n \uparrow \infty} X_{T_n} - Y=0$, thanks to
 L\'evy's  martingale convergence theorem and the fact that $Y \in \F_\infty$.

 It is sufficient to show that the martingale $M$ is uniformly integrable, or, equivalently that $M \equiv 0$. Towards this end, we assume that there exists $\varepsilon \in (0,1)$ such that $\P(\sup_{t \in [0,1/\varepsilon)} M_{t}  > \varepsilon) > \varepsilon$.   We then let $\sigma_1$ be the first time that $M$ is greater than or equal to $\varepsilon$ and  $\sigma_2$  the first time after time $1/\varepsilon$ that $|M|$ is less than or equal to $\varepsilon^2/4$. Then $\sigma_2$ is finite since $\liminf |M_t| = 0$ and, with $\tau = \sigma_1 \wedge \sigma_2$, we may assume that $M_\tau \in L^1$ and obtain
\begin{align*}
	\E[M_\tau] = \E[M_{\sigma_1} \1_{\{\sigma_1 \leq \sigma_2\}}]  + \E[M_{\sigma_2} \1_{\{\sigma_1 > \sigma_2\}}] \geq 
		\varepsilon \P\left(\sigma_1 \leq \frac{1}{\varepsilon}\right) - \frac{\varepsilon^2}{4} \geq \frac{3 \varepsilon^2}{4} >  \frac{\varepsilon^2}{4} \geq  \E[|M_{\sigma_2}|] \geq \E[M_{\sigma_2}],
\end{align*}
which contradicts  \ref{III} with $X$ replaced by $M$.
Thus $M \leq 0$ and in the same manner, we can show that $M \geq 0$, which yields the statement.
\end{proof}

We briefly remark that if $X$ is nonnegative, then \ref{IV} yields that $X$ is a martingale, thus a nonnegative supermartingale and therefore $\liminf_{t \uparrow \infty} |X_t| \in L^1$.  Theorem~\ref{T:1} then yields Cherny's result, namely the implication \ref{IV} $\Rightarrow$ \ref{I} provided that $X$ is nonnegative.

\section{A counterexample}
We now construct a filtered probability space $(\Omega, \F, \mathbb F, \P)$ with a right-continuous martingale $X$ that satisfies  \ref{IV} and has a limit  $X_\infty = \lim_{t \uparrow \infty} X_t$, but is not uniformly integrable. 

\begin{ex} \label{Ex:2}
We let $\Omega = (\N  \cup \{\infty\}) \times \{-1,1\}$ and $\F$ the power set of $\Omega$. 
Next, we let $\P$ be the probability measure on $(\Omega, \F)$ such that $\P((n,i)) = 1/(4 n^2)$ for all $n \in \N$ and $i \in \{-1,1\}$ and  $\P((\infty,i)) = (1 - \pi^2/12)/2$ for all $i \in \{-1,1\}$. Since $\sum_{n \in \N} 1/(2n^2) = \pi^2/12 \in (0,1)$, this yields indeed a probability measure on $(\Omega, \F)$. 

We let $\sigma: \Omega \rightarrow [0,\infty], \, (\omega_1, \omega_2) \mapsto \omega_1$ denote the first component and $D: \Omega \rightarrow \{-1,1\}, \, (\omega_1, \omega_2) \mapsto \omega_2$  the second component of each scenario $(\omega_1, \omega_2) \in \Omega$.  Then $\sigma$ and $D$ are independent and $\P(\sigma=x) = 1/(2x^{2}) \1_{x \in \N}$ for all $x \in [0,\infty]$, $\P(\sigma=\infty) = 1 - \pi^2/12$, and $\P(D = -1) = 1/2 = \P(D=1)$. 

We now set $X \equiv D \sigma^2 \1_{\lc \sigma, \infty\lc}$ and let $\mathbb F$ denote the natural filtration of $X$.  To wit, $X$ is a martingale that at time $\sigma$ jumps to either $\sigma^2$ or $-\sigma^2$ provided that $\sigma$ is finite.  In particular, we have
$$X_\infty = \lim_{t \uparrow \infty} X_t = D \sigma^2 \1_{\{\sigma < \infty\}}.$$
Next, we observe that 
$$\E[|X_\infty|] = \E[\sigma^2 \1_{\{\sigma < \infty\}}] = \sum_{n \in \N} n^2 \frac{1}{2 n^2} = \infty.$$
Hence, $X_\infty \notin L^1$, and $X$ is not a uniformly integrable martingale. 

Now, we let $\tau$ be an arbitrary finite stopping time and set $u = \tau((\infty,-1)) \vee \tau((\infty,1)) \in [0,\infty)$.   Thus, $\{\sigma = \infty\} \subset \{\tau \leq u\} \in \F_u$, which again yields $\{\sigma>u\} \subset \{\tau \leq u\}$ since $\{\sigma>u\}$ is the smallest event in $\F_u$ that contains $\{\sigma =\infty\}$.  Thus, since
\begin{align*}
	\{\tau \wedge \sigma \leq u\} = \{ \sigma \leq u\}  \cup \{\tau  \leq u\}   \supset \{\sigma \leq u\} \cup \{\sigma > u\} = \Omega,
\end{align*}
the stopping time $\tau \wedge \sigma$ is uniformly bounded by $u$ and we obtain that
$X_\tau = X^\sigma_\tau = X_{\tau \wedge \sigma} \in L^1$ and, by the optional sampling theorem again, that $\E[X_\tau] = \E[X_{\tau \wedge \sigma}] =0 = \E[X_0]$.  Hence,  $X$ satisfies  \ref{IV} but not \ref{I}, and therefore neither \ref{II} nor \ref{III}. \qed
\end{ex}
We remark that \citet{Dellacherie1970} discusses the filtration of a similar example.

In order to motivate the arguments in the next section, we now slightly modify Example~\ref{Ex:2} by extending the underlying filtration.

\begin{ex} \label{Ex:3}
We let $(\Omega, \F, \mathbb F, \P)$ be an arbitrary probability space that supports a right-continuous martingale $X = D \sigma^2 \1_{\lc \sigma, \infty\lc}$ with the same distribution as in Example~\ref{Ex:2} and an independent $\F_0$--measurable $(0,1)$--uniformly distributed random variable $U$.   

Our goal is to construct a finite stopping time $\tau$ such that $X_\tau \notin L^1$.  Thus, under this enlarged filtration, the previous example is not a counterexample for the implication \ref{IV} $\Rightarrow$ \ref{I}.  Indeed, we note that $\tau = 1/U$ is a finite stopping time and $|X_\tau| = \sigma^2 \1_{\{\sigma \leq 1/U\}}$. Therefore,
\begin{align*}
	\E[|X_\tau|] =   \E \left[ \sigma^2 \1_{\{\sigma \leq 1/U\}} \right] = \E\left[ \sum_{n=1}^{\lfloor 1/U \rfloor} n^2 \frac{1}{2 n^2} \right] = \frac{1}{2} \E\left[ \left\lfloor \frac{1}{U}\right\rfloor\right] \geq   \E\left[\frac{1}{2 U}\right] - \frac{1}{2}  = \int_0^1 \frac{1}{2y} \dd y - \frac{1}{2} = \infty,
\end{align*}
where $\lfloor\cdot \rfloor$ denotes the Gauss brackets, by independence of $U$ and $X$.  \qed
\end{ex}

Example~\ref{Ex:3} indicates that if ``randomized stopping'' is possible, a non-uniformly integrable martingale $X$ will not satisfy  \ref{IV}. In the next section, we will prove this assertion.

\section{An additonal randomization}  \label{S:random}
In this section, we show that the implication  \ref{IV} $\Rightarrow$ \ref{I} holds if we may randomize stopping times.  More precisely, we shall make the following assumption:
\begin{align}   \tag{$\mathfrak R$}  \label{eq:R}
\begin{split}
	\text{There exists a $(0,1)$--uniformly distributed random variable $U$   }  \\
	\text{and a finite stopping time $\eta$, such that $U$ is $\F_\eta$--measurable.}
	\end{split}
\end{align}

We emphasize that \eqref{eq:R} is an assumption on the underlying filtered probability space $(\Omega, \F, \mathbb F, \P)$, and not on the stochastic process $X$. 
We recall that we already argued that $X$ is a martingale if  \ref{IV} holds.  The conclusion that $X$ is also a uniformly integrable martingale, if
additionally \eqref{eq:R} holds, follows then from the following theorem.

\begin{theorem}	If  \ref{IV} and \eqref{eq:R} hold then so does \ref{I}; to wit, $X$ is then a uniformly integrable martingale.
\end{theorem}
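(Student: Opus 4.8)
The plan is to reduce to Theorem~\ref{T:1}. Since \ref{IV} already forces $X$ to be a martingale, it suffices to prove that $L:=\liminf_{t\uparrow\infty}|X_t|$ lies in $L^1$: indeed \ref{IV} together with $L\in L^1$ is precisely \ref{III}, and Theorem~\ref{T:1} then yields \ref{I}. So I would argue by contradiction, assuming $\E[L]=\infty$, and use \eqref{eq:R} to construct a \emph{finite} stopping time $\tau$ with $X_\tau\notin L^1$, contradicting \ref{IV}. The one easy preliminary is that $\E|X_{\eta+n}|\to\infty$ as $n\uparrow\infty$: each $X_{\eta+n}$ lies in $L^1$ by \ref{IV}, pathwise $\liminf_{n\uparrow\infty}|X_{\eta+n}|\ge\liminf_{s\uparrow\infty}|X_s|=L$ (because $\eta<\infty$, so $\eta+n\uparrow\infty$), and Fatou's lemma gives $\liminf_{n\uparrow\infty}\E|X_{\eta+n}|\ge\E[L]=\infty$.

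The finite stopping time will be a ``randomized deterministic horizon'': I use $U$ to select, at time $\eta$, one of countably many deterministic delays. For a Borel set $I\subseteq(0,1)$ and $n\in\N$ put $\mu_n(I):=\E[\,|X_{\eta+n}|\,\1_{\{U\in I\}}\,]$; each $\mu_n$ is a finite Borel measure on $(0,1)$, absolutely continuous with respect to Lebesgue measure (since $U$ is uniform, a Lebesgue-null $I$ carries no mass), with total mass $\mu_n((0,1))=\E|X_{\eta+n}|\to\infty$. Suppose for a moment that one can find pairwise disjoint Borel sets $I_1,I_2,\dots$ and indices $n_1,n_2,\dots$ with $\sum_{j\ge1}\mu_{n_j}(I_j)=\infty$. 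Then, with $I_0:=(0,1)\setminus\bigcup_{j\ge1}I_j$, the rule ``on $\{U\in I_j\}$ stop at time $\eta+n_j$, and on $\{U\in I_0\}$ stop at time $\eta$'' defines a random time $\tau$; since $\{U\in I_j\}\in\F_\eta\subseteq\F_{\eta+n_j}$ and each $\eta+n_j$ is a finite stopping time, the standard pasting of stopping times over an $\F_\eta$-measurable partition shows $\tau$ is a finite stopping time. Moreover
\[
  \E|X_\tau|=\E[\,|X_\eta|\,\1_{\{U\in I_0\}}\,]+\sum_{j\ge1}\E[\,|X_{\eta+n_j}|\,\1_{\{U\in I_j\}}\,]\ \ge\ \sum_{j\ge1}\mu_{n_j}(I_j)=\infty,
\]
so $X_\tau\notin L^1$, contradicting \ref{IV}; hence $\E[L]<\infty$.

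The remaining, and I expect genuinely nontrivial, ingredient is the combinatorial fact just used: \emph{if $(\mu_n)_{n\in\N}$ are finite Borel measures on $(0,1)$, each absolutely continuous with respect to Lebesgue measure, with $\mu_n((0,1))\to\infty$, then there exist pairwise disjoint Borel sets $I_j$ and indices $n_j$ with $\sum_j\mu_{n_j}(I_j)=\infty$.} I would prove this greedily, maintaining the invariant that the unused region $R_{j-1}$ still satisfies $\sup_n\mu_n(R_{j-1})=\infty$ (true for $R_0=(0,1)$). Given such an $R_{j-1}$, pick $n_j$ with $\mu_{n_j}(R_{j-1})\ge2$ and then, using that $\mu_{n_j}$ is non-atomic (being absolutely continuous), two \emph{disjoint} sets $I,I'\subseteq R_{j-1}$ with $\mu_{n_j}(I)=\mu_{n_j}(I')=1$. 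At least one of them — call it $I_j$ — satisfies $\sup_n\mu_n(R_{j-1}\setminus I_j)=\infty$: for if both $\sup_n\mu_n(R_{j-1}\setminus I)$ and $\sup_n\mu_n(R_{j-1}\setminus I')$ were finite, then, writing $R_{j-1}$ as the disjoint union of $I$, $I'$ and a remainder contained in $(R_{j-1}\setminus I)\cap(R_{j-1}\setminus I')$, and noting $I\subseteq R_{j-1}\setminus I'$ and $I'\subseteq R_{j-1}\setminus I$, one would get $\sup_n\mu_n(R_{j-1})<\infty$, a contradiction. Iterating produces disjoint $I_j$ with $\mu_{n_j}(I_j)=1$ for every $j$, whence $\sum_j\mu_{n_j}(I_j)=\infty$. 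Apart from this lemma, everything else is routine — that $X$ is a martingale (already noted), that the pasted $\tau$ is a bona fide finite stopping time, and the Jensen/Fatou step — and the upgrade to uniform integrability is then delivered by Theorem~\ref{T:1}.
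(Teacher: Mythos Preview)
Your proposal is correct and takes a genuinely different route from the paper. Both arguments reduce to showing $L:=\liminf_{t\uparrow\infty}|X_t|\in L^1$ and then invoke Theorem~\ref{T:1}, but the paper proceeds in two stages: first (Lemma~\ref{L:1}) it uses $U$ to build, via hitting times of the conditional-expectation process $t\mapsto\E[|X_t|\,|\,\F_\eta]$, a finite stopping time witnessing $\E[L\,|\,\F_\eta]<\infty$ a.s., and then (Lemma~\ref{L:2}) it upgrades this to $\E[L]<\infty$ by hitting the process $|X|$ against a right-continuous version of $\E[L\,|\,\F_{\eta\vee t}]-1$. You bypass both steps by sampling $|X|$ only at the discrete times $\eta+n$ and proving a clean, self-contained measure-theoretic lemma: from a sequence of non-atomic finite measures on $(0,1)$ with total masses tending to infinity one can greedily carve out disjoint pieces of unit mass while keeping the remaining region ``infinitely heavy''. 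Your approach avoids the conditional-expectation machinery and the attendant need for right-continuous modifications in the main argument (those issues are pushed entirely into the already-established Theorem~\ref{T:1}); the paper's approach, by contrast, stays closer to standard martingale tools and yields the intermediate statement $\E[L\,|\,\F_\eta]<\infty$ as a byproduct. The greedy lemma you isolate is a nice stand-alone fact.
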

\begin{proof}
Exactly as in the proof of Theorem~\ref{T:1}, we may assume, without loss of generality, that $\mathbb F$ and $X(\omega)$ are right-continuous  for each $\omega \in \Omega$.    Lemmata~\ref{L:1} and \ref{L:2} below then yield that $\liminf_{t \uparrow \infty} |X_t| \in L^1$ and the implication \ref{III} $\Rightarrow$ \ref{I}, proven in  Thereom~\ref{T:1}, yields the assertion.
\end{proof}

\begin{lemma} \label{L:1}
Assume that $\mathbb F$ and $X(\omega)$ are right-continuous  for each $\omega \in \Omega$.
If  $X_\tau \in L^1$ for all finite stopping times $\tau$ and \eqref{eq:R} holds then $\E[\liminf_{t \uparrow \infty} |X_t||\F_\eta] < \infty$. 
\end{lemma}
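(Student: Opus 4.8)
The plan is to argue by contradiction and to exhibit, from the hypothesis \eqref{eq:R}, a \emph{finite} stopping time $\tau$ with $\E[|X_\tau|] = \infty$, which is incompatible with $X_\tau \in L^1$ for all finite stopping times. Write $L := \liminf_{t\uparrow\infty}|X_t|$; by right-continuity of $X$ this is $\F_\infty$-measurable. If the assertion of the lemma failed, the event $A := \{\E[L\mid\F_\eta] = \infty\}$ would have positive probability, and hence $\E[L] = \E[\E[L\mid\F_\eta]] = \infty$.

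The key point is \emph{how} one should use $U$: the right stopping time is not a hitting time of a randomly chosen level, but a randomly chosen deterministic offset past $\eta$. Introduce the ``settling time''
\[
	\theta := \inf\{t \ge 0 : |X_s| \ge L/2 \text{ for all } s \ge t\},
\]
which is $\F_\infty$-measurable (not a stopping time, but this is irrelevant) and almost surely finite by the very definition of $L$; note $|X_t| \ge L/2$ whenever $t > \theta$. For any measurable $g : (0,1) \to (0,\infty)$, the random variable $g(U)$ is $\F_\eta$-measurable, so $\tau_g := \eta + g(U)$ is a finite stopping time, and on $\{\tau_g > \theta\}$ we have $|X_{\tau_g}| \ge L/2$. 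Therefore $\E[|X_{\tau_g}|] \ge \tfrac12 \E\big[L\,\1_{\{\theta - \eta < g(U)\}}\big]$. Since $U$ is uniform on $(0,1)$, I would disintegrate over $U$: with $\psi(u,K) := \E[L\,\1_{\{\theta - \eta < K\}}\mid U = u]$ one has $\E[L\,\1_{\{\theta - \eta < g(U)\}}] = \int_0^1 \psi(u, g(u))\,\dd u$, and $\psi(u,K) \uparrow m(u) := \E[L\mid U = u]$ as $K \uparrow \infty$ (monotone convergence, using $\theta - \eta < \infty$ a.s.), while $\int_0^1 m(u)\,\dd u = \E[L] = \infty$.

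It remains to choose $g$ with $\int_0^1 \psi(u, g(u))\,\dd u = \infty$; this is an elementary measure-theoretic matter. If $\{u : m(u) = \infty\}$ is Lebesgue-null, pick $g(u)$ finite with $\psi(u, g(u)) \ge m(u)/2$ and use $\int_0^1 m(u)\,\dd u = \infty$. Otherwise $\{m = \infty\}$ has positive measure and, localizing at one of its Lebesgue density points, one can choose a finite-valued $g$ for which $\int_0^1 \psi(u, g(u))\,\dd u$ already diverges over that set. In either case $\E[|X_{\tau_g}|] = \infty$, contradicting $X_{\tau_g} \in L^1$, and the lemma is proved.

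I expect the main obstacle to be finding this construction. The natural first attempt --- stop at the first time $|X|$ reaches a level $\ell$ chosen via $U$ (capped to keep the stopping time finite) --- only yields $\E[|X_\tau|] \gtrsim \ell\,\P(L > \ell)$, which can remain bounded even when $\E[L] = \infty$ (for instance if $\P(L > \ell) \sim c/\ell$); one really must stop at a genuinely late time so that $|X_\tau|$ is comparable to $L$ rather than to a fixed level. Once the family $\{\tau_g\}$ is identified, finiteness of $\tau_g$ is automatic (take $g$ finite-valued) and the divergence of the integral is routine, so I do not anticipate further serious difficulties.
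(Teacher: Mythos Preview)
Your strategy---argue by contradiction and produce a finite stopping time $\tau$ with $\E[|X_\tau|]=\infty$---matches the paper's, but the construction differs substantially. The paper does \emph{not} use a random time offset $\eta+g(U)$; it sets $g(t)=1/\P(A\cap\{U\le t\})$ and takes $\sigma$ to be the first time $t\ge\eta$ at which the $\F_\eta$-conditional expectation process $t\mapsto\E[|X_t|\mid\F_\eta]$ reaches the level $g(U)$, then puts $\tau=\sigma\1_A+\eta\1_{A^c}$. Since $\sigma$ is $\F_\eta$-measurable and, by Fatou, $\E[|X_t|\mid\F_\eta]\to\infty$ on $A$, finiteness of $\sigma$ on $A$ and the divergence $\E[\1_A\, g(U)]=\infty$ follow directly. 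So your closing diagnosis is only half right: the paper \emph{does} use a first-hitting construction, but of the conditional-expectation process rather than of $|X|$ itself, which sidesteps precisely the obstruction you describe.

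There is, however, a genuine gap in your argument. The settling time $\theta$ is almost surely finite only when $L<\infty$ almost surely; on $\{L=\infty\}$ the condition $|X_s|\ge L/2$ is never satisfied, so $\theta=\infty$ there. Consequently your monotone-convergence step yields $\psi(u,K)\uparrow \E[L\,\1_{\{L<\infty\}}\mid U=u]$, not $\E[L\mid U=u]$, and hence $\int_0^1 m(u)\,\dd u=\E[L\,\1_{\{L<\infty\}}]$, which may well be finite even though $\E[L]=\infty$. The hypotheses of the lemma (only $X_\tau\in L^1$ for finite $\tau$, not even the martingale property) do not exclude $\P(L=\infty)>0$; in fact ruling this out is part of what the lemma asserts. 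The gap is repairable---for instance, on $\{L=\infty\}$ replace $L/2$ by $U$-dependent integer thresholds and combine with a further partition of $(0,1)$---but as written the argument is incomplete. A smaller point: your ``Lebesgue density point'' step in Case~2 is unclear; a direct route is to partition $B=\{m=\infty\}$ into countably many pieces $B_n$ of positive measure and set $g\equiv K_n$ on $B_n$ with $K_n$ chosen so that $\int_{B_n}\psi(u,K_n)\,\dd u\ge 1$.
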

\begin{proof}
We  define the event $$A = \left\{\E\left[\left. \liminf_{t \uparrow \infty} |X_t|\right|\F_\eta\right] = \infty\right\} \in \F_\eta.$$  We need to argue that $\P(A) = 0$. Towards this end, we assume that $\P(A)>0$ and define the function $g: [0,1] \rightarrow [0,\infty]$ by $t \mapsto 1/\P(A \cap \{U \leq t\})$, where $U$ is the uniformly distributed random variable of \eqref{eq:R}. We note that the function $1/g$ is continuous and nondecreasing and set $t_\infty = \sup \{t \in [0,1]| g(t) = \infty\}$.  Then we have $\P(A \cap \{U \leq t_\infty\}) = 0$ and $\P(A \cap \{U \leq t\}) >0$ for all $t > t_\infty$, which yields that  $\1_A g(U)$ (with $0 \times \infty = 0$) is finite (almost surely). 

We now let $\sigma$ denote the first time $t$ after $\eta$ such that $\E[|X_t||\F_\eta]$ is greater than or equal to $g(U)$ and note that $\sigma$ is a  stopping time. Then, Fatou's lemma yields that $\sigma$ is finite on $A$.  We now set $\tau=\eta \1_{\Omega \setminus A} +\sigma \1_A$, which is again a finite stopping time, and observe
\begin{align*}
	\E[|X_\tau|]  \geq \E\left[\1_A  |X_\sigma| \right] =  \E\left[\1_A  \E[|X_\sigma| | \F_\eta] \right]  \geq \E\left[\1_A  g(U) \right]  \geq \sum_{n \in \N} \P\left(A \cap \left\{\P(A \cap \{U \leq t\})_{t = U} \leq \frac{1}{n}\right\}\right) = \infty.
\end{align*}
Here the last inequality follows from Tonelli's theorem and the last equality follows from the fact that for each $n \geq 1/\P(A)$ the corresponding term in the sum equals $1/n$.   To see this, fix $n \geq 1/\P(A)$, let  $t_n = \sup \{t \in [0,1]| g(t_n) \geq n\}$, and use the fact that $\P(A \cap \{U \leq t_n\}) = 1/n$.   The last display contradicts the assumption and thus yields $\P(A) = 0$.
\end{proof}

\begin{lemma} \label{L:2}
Assume that $\mathbb F$ and $X(\omega)$ are right-continuous  for each $\omega \in \Omega$.
If  $X_\tau \in L^1$ for all finite stopping times $\tau$ and $\E[\liminf_{t \uparrow \infty} |X_t||\F_\eta] < \infty$ holds for some finite stopping time $\eta$ then $\liminf_{t \uparrow \infty} |X_t| \in L^1$. 
\end{lemma}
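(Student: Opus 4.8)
The plan is to reduce the statement to a pointwise integrability bound by exploiting the tower property of conditional expectation together with the only available moment control, namely that $X_\tau \in L^1$ for every \emph{finite} stopping time $\tau$. Write $L = \liminf_{t\uparrow\infty}|X_t|$, which by right-continuity of $X$ is $\F_\infty$-measurable. The hypothesis is $\E[L\mid\F_\eta] < \infty$ almost surely; I want $\E[L] < \infty$. Naively one would just take the expectation of $\E[L\mid\F_\eta]$ and be done, but that is circular: $\E[L\mid\F_\eta]$ being finite almost surely does not imply it is integrable. So the real content is to upgrade ``finite a.s.'' to ``integrable,'' and the leverage for that must come from the stopping-time hypothesis, exactly as in Lemma~\ref{L:1}.

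The strategy I would pursue mirrors the truncation-and-stopping device of Lemma~\ref{L:1}. Set $h = \E[L\mid\F_\eta]$, an $\F_\eta$-measurable, a.s.\ finite random variable. Introduce the stopping time $\sigma$ equal to the first time $t \ge \eta$ at which $\E[|X_t|\mid\F_\eta] \ge h + 1$ (or some similar level tied to $h$); because $X$ is right-continuous and the conditional-expectation process has a right-continuous modification, $\sigma$ is a stopping time, and Fatou's lemma applied along a sequence realizing the liminf shows $\sigma = \infty$ only on a set where the relevant conditional expectation stays bounded. More carefully, I would use the sequence $(T_n)$ constructed in the proof of Theorem~\ref{T:1} with $|X_{T_n}| \to L$: on the event where $h$ is finite, $\E[|X_{T_n}|\mid\F_\eta]$ cannot exceed $h$ along a subsequence by conditional Fatou, so the stopped process is controlled. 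The point is to manufacture a \emph{finite} stopping time $\tau$ — built by splicing $\eta$ on a bad set and $\sigma$ (or a suitably truncated hitting time) on its complement — for which $\E[|X_\tau|]$ dominates $\E[L \wedge k]$ up to a controlled additive error, for every truncation level $k$. Letting $k \uparrow \infty$ and invoking monotone convergence then gives $\E[L] \le \E[|X_\tau|] + C < \infty$, since $X_\tau \in L^1$ by hypothesis.

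Concretely, the cleanest route is: fix $k \in \N$; let $\rho_k$ be the first time $t \ge \eta$ with $|X_t| \ge (L\wedge k) - 1$, which is finite everywhere because $|X_{T_n}|\to L$ and hence eventually exceeds $(L\wedge k)-1$; then $\tau_k = \eta\1_{\{\eta \text{ large}\}} + \rho_k\1_{\{\cdots\}}$ — or even just $\tau_k = \rho_k$ directly — is a finite stopping time, and $\E[|X_{\tau_k}|] \ge \E[(L\wedge k) - 1] = \E[L\wedge k] - 1$. Since the left side is finite by assumption on $\tau_k$ (here the hypothesis $X_\tau\in L^1$ for all finite $\tau$ is essential, and this is why the lemma's conclusion is not automatic), we get $\E[L\wedge k] \le \E[|X_{\tau_k}|] + 1$. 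The subtlety is that $\E[|X_{\tau_k}|]$ a priori depends on $k$ and could grow; so one must instead use $\E[|X_{\tau_k}|\mid\F_\eta] \le h + 1$, the conditional bound that holds \emph{uniformly in $k$} because $\rho_k \le \rho_\infty$ where $\rho_\infty$ realizes the liminf, and then take total expectation: $\E[L\wedge k] - 1 \le \E[|X_{\tau_k}|] = \E[\E[|X_{\tau_k}|\mid\F_\eta]] \le \E[h+1] = \E[h]+1$. But $\E[h]$ might itself be infinite — which loops back to the main obstacle.

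The main obstacle, then, is arranging the construction so that the conditional bound one obtains is not merely $h+1$ (whose expectation could be infinite) but something \emph{integrable}, and this is precisely where assumption~\eqref{eq:R} — implicit through Lemma~\ref{L:1} having already been applied — and the device $g(U)$ must re-enter, or where one instead carefully caps $h$ at a deterministic level and shows the cap can be removed. I expect the actual proof to handle this by the same $g(U)$-truncation trick as Lemma~\ref{L:1}: on the set $\{h > m\}$ for large $m$, use a randomized stopping time driven by $U$ to pick up a contribution proportional to $h \wedge (\text{something})$, forcing $\E[|X_\tau|] = \infty$ unless $\P(h = \infty$-ish$)$ vanishes at the right rate, ultimately yielding $\E[h] < \infty$ hence $\E[L] = \E[h] < \infty$. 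I would therefore structure the write-up as: (i) reduce to showing $\E[h]<\infty$ via the tower property; (ii) assume $\E[h]=\infty$ for contradiction; (iii) build a finite (possibly randomized, using $U$) stopping time $\tau$ with $\E[|X_\tau|]=\infty$ by a truncation argument on the level sets of $h$, exactly paralleling Lemma~\ref{L:1}; (iv) contradict $X_\tau\in L^1$. Step (iii) is the crux and the part requiring the most care, since one must verify finiteness of $\tau$ using $|X_{T_n}|\to L$ and monotonicity of the relevant hitting times, and must get the combinatorial bookkeeping of the level sets right so the resulting sum diverges.
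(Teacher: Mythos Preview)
Your proposal has a genuine gap. The hitting time $\rho_k = \inf\{t\ge\eta : |X_t|\ge (L\wedge k)-1\}$ is \emph{not} a stopping time, because the threshold $L\wedge k$ involves $L=\liminf_{t\uparrow\infty}|X_t|$, which is $\F_\infty$--measurable but not $\F_t$--measurable for any finite $t$. So the inequality $\E[|X_{\rho_k}|]\ge \E[L\wedge k]-1$, even if formally true, cannot be combined with the hypothesis ``$X_\tau\in L^1$ for all finite stopping times $\tau$'' since $\rho_k$ is not one. Your subsequent fallback to a randomized construction via $U$ and $g(U)$ is also not available here: Lemma~\ref{L:2} does \emph{not} assume~\eqref{eq:R}, only that $\E[L\mid\F_\eta]<\infty$ for some finite stopping time $\eta$; the uniform random variable $U$ has already done its work in Lemma~\ref{L:1} and is no longer part of the hypotheses.

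The missing idea is to replace the non-adapted target $L$ by an \emph{adapted} process that converges to it. Concretely, the paper sets $Y_t$ to be a right-continuous modification of $\E[L\mid\F_{\eta\vee t}]$; on each set $\{Y_0\le\kappa\}$ this is a uniformly integrable martingale, and L\'evy's convergence theorem gives $Y_t\to L$ as $t\uparrow\infty$. Now the hitting time $\tau=\inf\{t\ge\eta:|X_t|\ge Y_t-1\}$ \emph{is} a bona fide stopping time, and it is finite because $\liminf_t|X_t|=L>\lim_t Y_t-1$. At $\tau$ one has $|X_\tau|\ge Y_\tau-1$, while optional sampling on $\{Y_0\le\kappa\}$ and monotone convergence in $\kappa$ give $\E[Y_\tau]=\E[L]$. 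Hence $\E[L]=\E[Y_\tau]\le 1+\E[|X_\tau|]<\infty$. This single stopping time, with the adapted comparison process $Y$, replaces your entire sequence $(\rho_k)$ and sidesteps both the measurability problem and the need for any further randomization.
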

\begin{proof}
We let $Y = (Y_t)_{t \in [0,\infty)}$ denote the right-continuous modification of the finite-valued conditional expectation process $$\left(\E\left[\left. \liminf_{s \uparrow \infty} |X_s| \right|\F_{\eta \vee t}\right]\right)_{t \in [0,\infty)}.$$  For each $\kappa>0$ the process $(Y_t \1_{\{Y_0 \leq \kappa\}})_{t \in [0,\infty)}$ is a uniformly integrable martingale under its natural filtration and L\'evy's  martingale convergence theorem yields that $\1_{\{Y_0 \leq \kappa\}} \lim_{t \uparrow \infty}Y_t =  \1_{\{Y_0 \leq \kappa\}} \liminf_{s \uparrow \infty} |X_s|$, and thus  $\lim_{t \uparrow \infty}Y_t =  \liminf_{s \uparrow \infty} |X_s|$.

We now let  $\tau$ denote the first time after time $\eta$ such that $|X|$ is greater than or equal to $Y-1$, which is a stopping time. Moreover, $\tau$ is finite since $\liminf_{t \uparrow \infty} |X_t| > \lim_{t \uparrow \infty}Y_t - 1.$   We then obtain
\begin{align*}
 \E\left[\liminf_{t \uparrow \infty} |X_t|\right] =  \E\left[Y_\tau\right] \leq 1 +  \E\left[|X_\tau|\right] < \infty,
\end{align*}
which yields the statement.
\end{proof}

\bibliographystyle{apalike}
\setlength{\bibsep}{1pt}
\bibliography{aa_bib}

\begin{thebibliography}{}

\bibitem[Cherny, 2006]{Cherny_2006}
Cherny, A.~S. (2006).
\newblock Some particular problems of martingale theory.
\newblock In Kabanov, Y., Liptser, R., and Stoyanov, J., editors, {\em From
  Stochastic Calculus to Mathematical Finance: The Shiryaev Festschrift}, pages
  109--124. Springer.

\bibitem[Dellacherie, 1970]{Dellacherie1970}
Dellacherie, C. (1970).
\newblock Un exemple de la th{\'e}orie g{\'e}n{\'e}rale des processus.
\newblock In Meyer, P., editor, {\em S{\'e}minaire de Probabilit{\'e}s, IV
  (Lecture Notes in Mathematics, Volume 124)}, pages 60--70. Springer, Berlin.

\bibitem[Dellacherie and Meyer, 1978]{Dellacherie/Meyer:1978}
Dellacherie, C. and Meyer, P.~A. (1978).
\newblock {\em Probabilities and Potential}.
\newblock North-Holland, Amsterdam.

\bibitem[F\"ollmer, 1972]{F1972}
F\"ollmer, H. (1972).
\newblock The exit measure of a supermartingale.
\newblock {\em Zeitschrift f{\"u}r Wahrscheinlichkeitstheorie und Verwandte
  Gebiete}, 21:154--166.

\bibitem[Hulley, 2009]{Hulley_2009}
Hulley, H. (2009).
\newblock {\em Strict Local Martingales in Continuous Financial Market Models}.
\newblock PhD thesis, University of Sydney, Sydney, Australia.

\bibitem[Stroock and Varadhan, 2006]{SV_multi}
Stroock, D.~W. and Varadhan, S. R.~S. (2006).
\newblock {\em Multidimensional {D}iffusion {P}rocesses}.
\newblock Springer, Berlin.
\newblock Reprint of the 1997 edition.

\end{thebibliography}

\end{document}